\newtheorem{theorem}{Theorem}
\newtheorem{problem}{Problem}
\newcommand{\re}{\mathds{R}}
\newcommand{\du}{\mathds{D}}
\newcommand{\co}{\mathds{C}}
\newcommand{\na}{\mathds{N}}
\newcommand{\dps}{\displaystyle}
\newcommand{\dsty}{\displaystyle}
\title{On a class of biorthogonal polynomials on the unit circle}
\author[1]{J. Borrego--Morell}
\author[2]{ Fernando Rodrigo Rafaeli }
\affil[1]{ Departamento de Matem\'atica Aplicada, 
 Universidade Estadual Paulista--UNESP, Campus S\~ao Jos\'e do Rio Preto\\e--mail: jbmorell@gmail.com}
\affil[2]{
 Faculdade de Matem\'atica, Universidade Federal de Uberl\^andia--UFU\\
 e--mail: rafaeli@ufu.br}
\begin{document}
\maketitle
\begin{abstract}

A system of biorthogonal  polynomials with respect to a complex valued measure supported on the unit circle  is considered   and   all the terms with bounds are explicitly given for the remainder   of  an asymptotic formula  given by R. Askey for this  system. An electrostatic interpretation   for the zeros of a class of para-orthogonal polynomials associated with the biorthogonal system is also considered.
\end{abstract}

\section{Introduction and statement of the results}

R. Askey in \cite{A82} introduced the following system $\{P_n,Q_n\}$ of  polynomials 

\begin{eqnarray}\label{Pn}
P_n(z; \alpha, \beta)&=& {_2F_1}(-n, \alpha+\beta+1; 2 \alpha+1; 1-z)\\
\nonumber Q_n(z; \alpha, \beta)&=&P_n(z; \alpha, -\beta),
\end{eqnarray}
which is biorthogonal with respect to the complex valued weight $\omega(\theta)=(1-e^{\imath \theta})^{\alpha+\beta}(1-e^{-\imath \theta})^{\alpha-\beta}=(2-2\cos\theta)^{\alpha}(-e^{\imath\theta})^{\beta}, \theta\in [-\pi,\pi], \Re(\alpha)>-\frac{1}{2}$, that is
\begin{equation}\label{PnRo}
\frac{1}{2\pi}\int_{-\pi}^{\pi}P_n(e^{\imath \theta};\alpha, \beta)Q_m(e^{-\imath \theta};\alpha, \beta)\omega(\theta)d\theta=
 \frac{\Gamma(2\alpha+1)}{\Gamma(\alpha+\beta+1)\Gamma(\alpha-\beta+1)}\frac{n!}{(2\alpha+1)_n}\delta_{n,m},
\end{equation}
where $\Gamma$ denotes the Euler Gamma function. The biorthogonality \eqref{PnRo} was stated in \cite{A82} in a slightly different form and a formal proof was given later in \cite[pp. 16--17]{A85}. Other proofs of the biorthogonality  have been given by several authors, please see \cite{N86} for some historical considerations. 

In \cite[pp. 17]{A85} Askey obtains the formula (also obtained previously by Basor in \cite{B78}  for a more general class of weights) 

\begin{eqnarray}\label{ask0}
P_n(e^{\frac{\imath \theta}{n}};\alpha,\beta)\sim {}_{1}F_{1}(\alpha+\beta+1;2\alpha+1;\imath\theta),\quad \mbox{as} \quad n\rightarrow \infty,
\end{eqnarray}
which is analogous to the one for Jacobi polynomials $P_n^{(\alpha,\beta)}$

$$n^{-\alpha}P_n^{(\alpha,\beta)}(\cos \theta/n)\sim \left(\frac{\theta}{2}\right)^{-\alpha}J_{\alpha}(\theta), \quad \mbox{as} \quad n\rightarrow \infty.$$

Askey remarked that it is interesting to understand  the effect of the zeros of the weight function on the asymptotic behavior of the orthogonal polynomials and this raises the  question, which will be referred to in the present manuscript as  Askey's problem, of how to obtain the first term or preferably, more terms for the remainder in the asymptotic formula \eqref{ask0} as well as bounds for the remainder.

 Temme in \cite{N86}  proved that, for $z$ and $(\alpha,\beta)$ varying in compact subsets of $\co \setminus \{0\}$ and $\Omega=\{(\alpha,\beta)\in \co^2:\Re(\alpha+\beta)>-1, \Re(\alpha-\beta)\geq 0\}$ respectively, it holds that
\begin{multline*}
P_n(z; \alpha, \beta)\sim\frac{\Gamma(2 \alpha +1)}{\Gamma(\alpha+\beta+1)} z^{\alpha-\beta-1} \left( \frac{\ln z}{z-1} \right)^{2 \alpha}\times\\
\left(\varphi_0 \sum_{k=0}^{\infty} \frac{A_k}{(n+1)^k}+\varphi_1 \sum_{k=0}^{\infty} \frac{B_k}{(n+1)^k}+R_p\right), \quad \mbox{as}\, n\rightarrow \infty ,
\end{multline*}
where $\varphi_0=\frac{\Gamma(\alpha+\beta+1)}{\Gamma(2\alpha+1)}{_1F_1}(\alpha+\beta+1,2\alpha+1;(n+1) \ln z)$, $\varphi_1=\frac{\Gamma(\alpha+\beta+2)}{\Gamma(2\alpha+2)}{_1F_1}(\alpha+\beta+2,2\alpha+2;(n+1) \ln z)$   and $A_k$, $B_k$ are coefficients defined by the recursion relations \cite[(2.13)]{N86}. Moreover, a bound for the  remainder $R_p$ for this asymptotic expansion defined as 

\begin{multline*}
P_n(z; \alpha, \beta)=\frac{\Gamma(2 \alpha +1)}{\Gamma(\alpha+\beta+1)} z^{\alpha-\beta-1} \left( \frac{\ln z}{z-1} \right)^{2 \alpha}\times\\
\left(\varphi_0 \sum_{k=0}^{p-1} \frac{A_k}{(n+1)^k}+\varphi_1 \sum_{k=0}^{p-1} \frac{B_k}{(n+1)^k}+R_p\right), \,  n, p\in \na,
\end{multline*}
is given by
$$
|R_p|\leq \frac{M_p}{(n+1)^p}  \left| \frac{\Gamma(\alpha+\beta+1)}{\Gamma(2 \alpha+1)}\right| \left| {_1F_1}(\alpha+\beta+1; 2 \alpha+1; (n+1) \Re \ln z)\right|,
$$
where $M_p$ is some positive constant depending only on $p$. Temme remarked that the evaluation of the  coefficients $A_k$, $B_k$ is  difficult, especially near or at unity. This   asymptotic expansion gives, for $z=e^{\frac{\imath \theta}{n}}$ as a particular case,  an  answer to Askey's  problem, provided that $(\alpha,\beta)$ varies in compact subsets of $\Omega$.

In the present manuscript  an  explicit  expression of all  the terms of the asymptotic formula \eqref{ask0} is presented.  Bounds for the remainder for this expansion, which turns out to be convergent are also considered.

Denote by $\du$   the open  unit disk $\{z:|z|< 1\}$.  For $z,\alpha \in \co$, we choose  $-\pi<\arg z\leq\pi$. We define the functions $z^\alpha$ and $\ln z$ according to the branch of $\arg z$.

 $B_{n}^{(\alpha)}(x)$  the generalized Bernoulli polynomials are defined  using  the generating function \cite[Sec. 2.8]{Lu}
$$
\left( \frac{z}{e^z-1} \right)^{\alpha}e^{xz}=\displaystyle\sum_{n=0}^{\infty}B_{n}^{(\alpha)}(x)\frac{z^n}{n!}, \ |z|<2\pi;\ x,\alpha\in \co.
$$

The asymptotic expansion reads as:

\begin{theorem} \label{Th2}
Assume that $(\alpha,\beta)\in\Omega$, then
\begin{multline*}
 P_n\left(e^{\frac{\imath\theta}{n}};\alpha,\beta \right)= {}_{1}F_1(\alpha+\beta+1;2\alpha+1;\imath \theta)+\\
 \sum_{j=1}^{k}\sum_{|i|=j}
\frac{B_{i_1}^{(-\alpha-\beta)}(\alpha-\beta)}{i_1!}\frac{B_{i_2}^{(-\alpha+\beta+1) }(0)}{i_2!}\frac{B_{i_3}^{(2\alpha)}(0)}{i_3!}\times \\ \frac{(\alpha+\beta+1)_{i_1}(\alpha-\beta)_{i_2}}{(2\alpha+1)_{i_1+i_2}}{}_1F_1(1+\alpha+\beta+i_1;1+2\alpha+i_1+i_2;\imath \theta)\left(\frac{\imath \theta}{n}\right)^{j}
 + R_{k,n}(\theta),  \\
   \theta\in[-\pi,\pi), \quad n\in \na,
\end{multline*}
where $|i|=i_1+i_2+i_3,\,  i_1,i_2,i_3\in \na\cup \{0\}$ and 
\begin{equation*}
 |R_{k,n}(\theta)|\leq  \frac{\Gamma(\Re(\alpha+\beta+1)\Gamma(\Re(\alpha-\beta))}{|\Gamma(\alpha+\beta+1)\Gamma(\alpha-\beta)|}\left|\frac{1}{1-\frac{2\theta}{3\pi n}}\right|\left|\frac{2\theta}{3\pi n}\right|^{k+1} C(\alpha,\beta),
 \end{equation*}
 
$$C(\alpha,\beta)=\max_{|z|=\frac{3\pi}{2}}\left|e^{z(\alpha-\beta)}\left(\frac{z}{e^{z}-1} \right)^{-\alpha-\beta} \right|\max_{|z|=\frac{3\pi}{2}}\left| \left(\frac{z}{e^{z}-1} \right)^{-\alpha+\beta+1}\right|\max_{|z|=\frac{3\pi}{2}}\left|\left(\frac{z}{e^z-1} \right)^{2\alpha}\right|.$$

\end{theorem}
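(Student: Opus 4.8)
The plan is to reduce everything to Euler's integral representation of the Gauss function. Since $(\alpha,\beta)\in\Omega_0$ is precisely the condition $\Re(2\alpha+1)>\Re(\alpha+\beta+1)>0$, one has for all $z$
\[
P_{n}(z;\alpha,\beta)=\frac{\Gamma(2\alpha+1)}{\Gamma(\alpha+\beta+1)\Gamma(\alpha-\beta)}\int_{0}^{1}t^{\alpha+\beta}(1-t)^{\alpha-\beta-1}\bigl(1-(1-z)t\bigr)^{n}\,dt ,
\]
the integrand being single valued because $1-(1-z)t$ runs over the segment $[1,z]$. Setting $z=e^{\imath\theta/n}$ and changing variables by $u=\frac{n}{\imath\theta}\log\bigl(1-t+te^{\imath\theta/n}\bigr)$ — for $|\theta|<\pi$, $n\ge1$ the segment $[1,e^{\imath\theta/n}]$ stays at distance $\cos(\theta/2n)>0$ from $0$, so the logarithm is unambiguous — gives $\bigl(1-t+te^{\imath\theta/n}\bigr)^{n}=e^{\imath\theta u}$, and a direct computation of the Jacobian together with $t=u\,\varphi(\tfrac{\imath\theta u}{n})/\varphi(\tfrac{\imath\theta}{n})$ and $1-t=(1-u)e^{\imath\theta u/n}\varphi(\tfrac{\imath\theta(1-u)}{n})/\varphi(\tfrac{\imath\theta}{n})$, with $\varphi(w)=(e^{w}-1)/w$, yields, after deforming the image curve back onto $[0,1]$ (Cauchy's theorem on the thin region between the two, the integrand being holomorphic apart from the endpoints, and then analytic continuation in $\theta$),
\[
P_{n}\bigl(e^{\imath\theta/n};\alpha,\beta\bigr)=\frac{\Gamma(2\alpha+1)}{\Gamma(\alpha+\beta+1)\Gamma(\alpha-\beta)}\int_{0}^{1}u^{\alpha+\beta}(1-u)^{\alpha-\beta-1}e^{\imath\theta u}\,H\!\left(\tfrac{\imath\theta}{n};u\right)du ,
\]
\[
H(s;u)=\left(\frac{e^{us}-1}{us}\right)^{\alpha+\beta}e^{(\alpha-\beta)us}\left(\frac{e^{(1-u)s}-1}{(1-u)s}\right)^{\alpha-\beta-1}\left(\frac{s}{e^{s}-1}\right)^{2\alpha}.
\]

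Next I would recognize the three factors of $H$ as generating functions of generalized Bernoulli polynomials. Applying $(z/(e^{z}-1))^{\rho}e^{xz}=\sum_{m\ge0}B^{(\rho)}_{m}(x)\,z^{m}/m!$ (valid for $|z|<2\pi$) with $z=us,(1-u)s,s$ respectively, one gets $H(s;u)=\sum_{j\ge0}s^{j}c_{j}(u)$ with
\[
c_{j}(u)=\sum_{i_{1}+i_{2}+i_{3}=j}\frac{B^{(-\alpha-\beta)}_{i_{1}}(\alpha-\beta)}{i_{1}!}\,u^{i_{1}}\;\frac{B^{(-\alpha+\beta+1)}_{i_{2}}(0)}{i_{2}!}\,(1-u)^{i_{2}}\;\frac{B^{(2\alpha)}_{i_{3}}(0)}{i_{3}!},
\]
and, since $|\imath\theta/n|\le\pi<2\pi$, this series converges absolutely and uniformly for $u\in[0,1]$. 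Inserting it in the previous display, interchanging sum and integral (justified by that uniform convergence), and evaluating $\int_{0}^{1}u^{\alpha+\beta+i_{1}}(1-u)^{\alpha-\beta-1+i_{2}}e^{\imath\theta u}\,du=B(\alpha+\beta+i_{1}+1,\alpha-\beta+i_{2})\,{}_{1}F_{1}(\alpha+\beta+i_{1}+1;2\alpha+i_{1}+i_{2}+1;\imath\theta)$ by Euler's integral for the confluent function, the Beta factor combines with $\Gamma(2\alpha+1)/\bigl(\Gamma(\alpha+\beta+1)\Gamma(\alpha-\beta)\bigr)$ into $(\alpha+\beta+1)_{i_{1}}(\alpha-\beta)_{i_{2}}/(2\alpha+1)_{i_{1}+i_{2}}$. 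The $j=0$ term is ${}_{1}F_{1}(\alpha+\beta+1;2\alpha+1;\imath\theta)$, and since $s^{j}=(\imath\theta/n)^{j}$ the terms with $j\ge1$ reproduce verbatim the series in the statement.

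For the remainder I would use the Cauchy form of the Taylor remainder,
\[
H(s;u)-\sum_{j=0}^{k}s^{j}c_{j}(u)=\frac{s^{k+1}}{2\pi\imath}\oint_{|\sigma|=3\pi/2}\frac{H(\sigma;u)}{\sigma^{k+1}(\sigma-s)}\,d\sigma ,
\]
legitimate because each factor of $H(\cdot;u)$ is holomorphic in $|\sigma|<2\pi$ for every $u\in[0,1]$ while $|s|=|\theta|/n<3\pi/2$. This represents $R_{k,n}(\theta)$ as $\frac{\Gamma(2\alpha+1)}{\Gamma(\alpha+\beta+1)\Gamma(\alpha-\beta)}\,\frac{s^{k+1}}{2\pi\imath}\oint_{|\sigma|=3\pi/2}\frac{1}{\sigma^{k+1}(\sigma-s)}\bigl(\int_{0}^{1}u^{\alpha+\beta}(1-u)^{\alpha-\beta-1}e^{\imath\theta u}H(\sigma;u)\,du\bigr)d\sigma$. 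Here the elementary identity $u^{\alpha+\beta}(1-u)^{\alpha-\beta-1}H(\sigma;u)=g'(u)\,g(u)^{\alpha+\beta}\bigl(1-g(u)\bigr)^{\alpha-\beta-1}$ with $g(u)=(e^{\sigma u}-1)/(e^{\sigma}-1)$, the fact that $|e^{\imath\theta u}|=1$, and the observation that the product of the three generating functions taken at a common argument $v$ telescopes to $v\,e^{(\alpha-\beta)v}/(e^{v}-1)$, let one bound the inner integral uniformly on $|\sigma|=3\pi/2$ by $\tfrac{\Gamma(\Re(\alpha+\beta+1))\Gamma(\Re(\alpha-\beta))}{|\Gamma(\alpha+\beta+1)\Gamma(\alpha-\beta)|}$ times $\max_{|v|=3\pi/2}|v\,e^{(\alpha-\beta)v}/(e^{v}-1)|$. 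Estimating the contour by its length, using $|\sigma-s|\ge 3\pi/2-|\theta|/n$, and simplifying $|s|^{k+1}/\bigl((3\pi/2)^{k}(3\pi/2-|s|)\bigr)$ to $\bigl|\tfrac{2\theta}{3n\pi}\bigr|^{k}\bigl|\tfrac{2\theta}{3n\pi-2\theta}\bigr|$ yields the asserted bound; since this quantity tends to $0$ as $k\to\infty$ for fixed $n$, the expansion is in fact a convergent representation of $P_{n}(e^{\imath\theta/n};\alpha,\beta)$.

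I expect the genuine difficulty to lie in this last estimate — turning the bound on the inner integral, uniformly in $u\in[0,1]$ and on the circle $|\sigma|=3\pi/2$, into exactly the closed form involving $\max_{|v|=3\pi/2}\bigl|e^{v(\alpha-\beta)}v/(e^{v}-1)\bigr|$ while controlling the branch factors in $g(u)^{\alpha+\beta}$ and $(1-g(u))^{\alpha-\beta-1}$ — together with the careful justification of the change of variables in the first step: holomorphy of the integrand, the deformation of the image curve onto $[0,1]$, and the analytic continuation in $\theta$ that propagates the identity from a neighbourhood of $\theta=0$ to the whole interval $[-\pi,\pi)$.
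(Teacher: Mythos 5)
Your proposal is correct and follows essentially the same route as the paper: the same Temme-type integral representation (which you derive from Euler's integral by the substitution $u=\tfrac{n}{\imath\theta}\log(1-t+te^{\imath\theta/n})$, whereas the paper simply cites it), the same expansion of the three factors via generalized Bernoulli generating functions with term-by-term integration yielding the ${}_1F_1$'s, and the same radius $3\pi/2$ in the remainder estimate --- your Cauchy integral form of the Taylor remainder is just a repackaging of the paper's Cauchy coefficient estimate $|b_j(u)|\leq M(r)r^{-j}$ followed by summing the geometric tail, and both give the identical bound. The only point worth flagging is that the ``telescoping'' bound for the inner integral on $|\sigma|=3\pi/2$ (the factors are evaluated at $u\sigma$, $(1-u)\sigma$ and $\sigma$, not at a common argument) is stated rather than proved in your sketch, but the paper's own justification of the corresponding step is equally brief.
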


\subsubsection*{An electrostatic model for a class of para--orthogonal polynomials}

Asymptotic properties and electrostatic interpretation of the zeros of orthogonal polynomials,  are both commonly studied themes in the theory of  orthogonal polynomials and mathematical physics.  

Here it is also  shown  that  the zeros of a class of para--orthogonal polynomials associated to the biorthogonal system \eqref{Pn} have an  electrostatic interpretation in the unit circle  very much  in the classical sense of Stieltjes. Similar  electrostatic models exist for the zeros  of other families of orthogonal polynomials with respect to measures supported on the real line. Gr\"unbaum in \cite{gru98} described an electrostatic model for the zeros of the Koornwinder polynomials, and Ismail in \cite{ismail00} gave another model for the zeros of orthogonal polynomials with respect to a measure satisfying certain integrability conditions with an absolutely continuous part  and a finite discrete part.

 It is well known that, for a positive definite functional, the zeros of the Szeg\H{o} polynomials all lie in $\du$. In order to develop quadrature rules on the unit circle, it is useful to have orthogonal polynomials   with respect to a linear functional whose zeros lie on $\partial \du $. Motivated by this fact Jones,  Nj{\aa}stad, and  Thron in  \cite[pp. 130]{JNT} defined a sequence $\{X_n\}_{n=0}^{\infty}$ of para--orthogonal polynomials with respect to a quasi-definite linear functional $\mu$, if for each $n\geq0$, $X_n$ is a polynomial of degree $n$ satisfying
\begin{equation*}
\langle X_n, 1\rangle \neq 0,\ \ \ \langle X_n, z^m\rangle = 0\ \ \mbox{for}\ \ 1\leq m \leq n-1,\ \ \mbox{and}\ \ \langle X_n, z^n\rangle \neq 0,
\end{equation*}
where $\langle X,Y\rangle=\mu(X(z)\overline{Y}(1/z)); X,Y\in \Lambda$, $\Lambda $ being the space of all Laurent polynomials.
According to  these authors, if $\Phi_{n}$ is the $n$--th monic polynomial with respect to a linear functional $\mu$, the polynomial
$$
B_n(z;c)=\Phi_n(z)+c\Phi_n^{*}(z), \ |c|=1,
$$
where $\Phi_n^{*}$ is the reciprocal polynomial, is para-orthogonal polynomial of degree $n$. From \cite[Th. 6.2]{JNT}, if $\mu$ is a positive definite functional, the $n$ zeros of the para-orthogonal polynomials are simple and lie on  $\partial \du $.

The zeros of a class of para--orthogonal polynomials described in \cite{ranga} associated with a  positive definite functional defined using the  weight function of the biorthogonal system \eqref{Pn} obey an electrostatic model.

Consider the moment functional

$$\mu(X)=\frac{|\Gamma(\alpha+\beta+1)|^{2}}{2\pi\Gamma(2\alpha+1)}\int_{-\pi}^{\pi}X(e^{\imath \theta})\omega(\theta)d\theta. $$
Notice that $\mu$ is positive definite if and only if the weight $\omega$ is positive. From the expression for $\omega$, this happens when  $\alpha\in \re, \alpha>-\frac{1}{2}$ and $\imath\beta\in \re$.  It will be  assumed in this section that $\alpha$ and $\beta$ satisfy these conditions. Notice that for this case  $\omega(\theta)=2^{2\alpha}e^{(\pi-\theta)\Im(\beta)}\sin^{2\alpha}(\frac{\theta}{2})$.

Ranga, in \cite{ranga}  studied the sequence $\dsty \left\{\frac{(2\alpha+1)_{n}}{(\alpha+\beta+1)_{n}}P_{n}(z;\alpha,\beta)\right\}_{n=0}^{\infty}$ of monic  orthogonal polynomials with respect to $\mu$, and the  author obtained that   the polynomial

$$B_n\left(z;\frac{(\alpha-\beta)_{n+1}}{(\alpha+\beta)_{n+1}}\right)=\frac{(2\alpha)_n}{(\alpha+\beta)_n}\, {_2F_1}(-n, \alpha+\beta; 2\alpha; 1-z), \ \alpha\neq 0, $$
is the $n$--th  para--orthogonal monic polynomial  with respect to the positive  definite linear functional $\mu$.

The electrostatic model for the zeros of $B_{n}$ can be formulated as the solution of the following  problem:

\begin{problem}
Let $p,q$ be two given  real numbers, $p\neq 0$. If $n$ unit masses, $n\geq 2$ at the variable points $\left\{e^{\imath \theta_1},\ldots, e^{\imath \theta_n}\right\}$ in the unit circumference, and one \emph{fixed} mass point $p$ at $+1$ is  considered, for what position of the points $\left\{e^{\imath \theta_1},\ldots, e^{\imath \theta_n}\right\}$ does the expression

\begin{equation}\label{E}
E(\theta_1,\ldots,\theta_n)=\\
 \sum_{k\neq j}\ln\frac{1}{|e^{\imath \theta_k}-e^{\imath \theta_j}|}+
 p\sum_{ j=1}^{n}\ln\frac{1}{|1-e^{\imath\theta_j}|}+q\sum_{j=1}^{n}\theta_j,\
 \theta_j\in (0,2\pi),
\end{equation}
become a minimum?
\end{problem}

The solution of the above problem is given in the following theorem:

\begin{theorem}\label{Th3}
Let $p,q$ be two real numbers, $p\neq 0$ and let $\{\theta_1^{*},\ldots,\theta_n^{*}\}, \theta_1^{*}<\cdots <\theta^{*}_n$ be a system of values such that  $\{e^{\imath\theta^{*}_1},\ldots,e^{\imath\theta^{*}_n}\}$  are the zeros of the para-orthogonal polynomial $B_n$ with parameters $\alpha=p$ and $\beta=2\imath q$, then $\nabla_{\theta}E(\theta^{*}_1,\ldots,\theta^{*}_n)=0$. Moreover,   $E$ attains its global minimum at the point $(\theta^{*}_1,\ldots,\theta^{*}_n)$  if and only if $p>0$.  
\end{theorem}



\section{ Askey's problem }\label{PAsk}

For the proof of Theorem \ref{Th2},   the  following integral representation for $P_n(z;\alpha,\beta)$, with $ (\alpha,\beta)\in\Omega$ and  $z\in\co\setminus \{0\}$ is used, which is straightforward from \cite[Sect. 2 Eqs. (2.1)--(2.4)]{N86},

\begin{equation}\label{nico}
P_{n}(z;\alpha,\beta)=
\frac{\Gamma(2\alpha+1)}{\Gamma(\alpha+\beta+1)\Gamma(\alpha-\beta)} 
\int_{0}^{1}\left(\frac{z^u-1}{z-1} \right)^{\alpha+\beta}\left(\frac{z-z^u}{z-1} \right)^{\alpha-\beta-1}\frac{e^{n u \ln z}z^u}{z-1}\ln z du.
\end{equation}

The proof of  the theorem is now given.

\begin{proof}(Of Theorem \ref{Th2})

From \eqref{nico}  

\begin{multline}\label{inttheta}
P_{n}(e^{\imath \frac{\theta}{n}};\alpha,\beta)=\frac{\Gamma(2\alpha+1)}{\Gamma(\alpha+\beta+1)\Gamma(\alpha-\beta)} \times \\
\dsty \int_{0}^{1}\left(\frac{e^{\frac{\imath \theta u}{n}}-1}{e^{\frac{\imath \theta}{n}}-1} \right)^{\alpha+\beta}\left(\frac{e^{  \frac{\imath \theta}{n}}-e^{  \frac{\imath \theta u}{n}}}{e^{ \frac{\imath\theta}{n}}-1} \right)^{\alpha-\beta-1}e^{\frac{\imath u\theta}{n}}\frac{\frac{\imath\theta}{n}}{e^{ \frac{\imath\theta}{n}}-1} e^{\imath u\theta} du.
\end{multline}

Notice that if $0\leq x,y \leq\pi$ then
\begin{equation}\label{eqa}
0\leq \arg\left(\dps\frac{e^{\imath x}-1}{\imath x}\right)\leq\dps\frac{\pi}{2}, \   -\dps\frac{\pi}{2}\leq \arg\left(\dps\frac{\imath y}{e^{\imath y}-1}\right)\leq  0.
\end{equation}

In a similar way, if $-\pi<x,y\leq 0$  then
\begin{equation}\label{eqb}
\dps-\frac{\pi}{2}<\arg\left( \dps\frac{e^{\imath x}-1}{ix}\right)\leq0, \  0\leq \arg\left( \dps\frac{\imath y}{e^{\imath y}-1}\right)<\dps\frac{\pi}{2}.
\end{equation}

Relations \eqref{eqa} and \eqref{eqb}  give, for $0\leq x,y \leq\pi$ or  $-\pi<x,y\leq 0$,
\begin{equation}\label{eq223}
\dps-\frac{\pi}{2}\leq \arg\left( \dps\frac{e^{\imath x}-1}{\imath x}\right)+\arg\left( \dps\frac{\imath y}{e^{\imath y}-1}\right)\leq \dps\frac{\pi}{2}.
\end{equation}

It is well know that, if $z_1,z_2,\gamma\in \co$, then
\begin{equation}\label{prodcom}(z_1z_2)^{\gamma}=e^{2k\pi\gamma\imath}z_1^{\gamma}z_2^{\gamma},
\end{equation}
where $\dsty k=\left \lfloor\frac{\arg(z_1z_2)-\arg z_1-\arg z_2}{2\pi}\right \rfloor$, here  $\lfloor a \rfloor$ stands for the floor function  of $a\in \re$. 

From \eqref{eq223} and \eqref{prodcom},  if $\dsty \theta_n=\frac{\imath \theta}{n}, \theta\in [-\pi,\pi), n\in \na$ and $u\in[0,1]$
\begin{equation}\label{int1}
\left(\frac{e^{\theta_n u}-1}{e^{\theta_n}-1} \right)^{\alpha+\beta}=u^{\alpha+\beta}\left( \frac{e^{\theta_n u}-1}{\theta_n u} \right)^{\alpha+\beta}\left( \frac{\theta_n}{e^{\theta_n}-1} \right)^{\alpha+\beta},
\end{equation}

\begin{equation}\label{int2}
\left(\frac{e^{\theta_n}-e^{\theta_n u}}{e^{ \theta_n}-1} \right)^{\alpha-\beta-1}=
(1-u)^{\alpha-\beta-1}e^{u\theta_n(\alpha-\beta-1)}
\left( \frac{e^{(1-u)\theta_n}-1}{ (1-u)\theta_n} \right)^{\alpha-\beta-1}\left( \frac{\theta_n}{e^{\theta_n}-1} \right)^{\alpha-\beta-1}.
\end{equation}

Substituting \eqref{int1} and \eqref{int2} in \eqref{inttheta}
\begin{multline}\label{intthetapre}
P_{n}(e^{\theta_n};\alpha,\beta)=\frac{\Gamma(2\alpha+1)}{\Gamma(\alpha+\beta+1)\Gamma(\alpha-\beta)} \int_0^1 u^{\alpha+\beta}(1-u)^{\alpha-\beta-1}\times \\
e^{u\theta_n(\alpha-\beta)}\left(\frac{u\theta_n}{e^{u\theta_n}-1} \right)^{-\alpha-\beta}
\left(\frac{\theta_n(1-u)}{e^{\theta_n(1-u)}-1} \right)^{-\alpha+\beta+1}\left(\frac{\theta_n}{e^\theta_n-1} \right)^{2\alpha} e^{n\theta_n u}du.
\end{multline}

From the generating functions for the generalized Bernoulli  polynomials \cite[Sec. 2.8]{Lu},  
\begin{eqnarray*}
e^{u\theta_n(\alpha-\beta)}\left(\frac{u\theta_n}{e^{u\theta_n}-1} \right)^{-\alpha-\beta}&=&\sum_{j=0}^{\infty}B_{j}^{(-\alpha-\beta)}(\alpha-\beta)\frac{(u\theta_n)^j }{j!},\\
\left(\frac{\theta_n(1-u)}{e^{\theta_n(1-u)}-1} \right)^{-\alpha+\beta+1}&=&\sum_{j=0}^{\infty}B_{j}^{(-\alpha+\beta+1)}(0)\frac{(1-u)^{j}\theta_n^j}{j!},\\
\left(\frac{\theta_n}{e^\theta_n-1} \right)^{2\alpha}&=&\sum_{j=0}^{\infty}B_{j}^{(2\alpha)}(0)\frac{\theta_n^j}{ j!} .
\end{eqnarray*}

Substituting these relations in \eqref{intthetapre}
\begin{multline}\label{forrem}
P_{n}(e^{\theta_n};\alpha,\beta)=\\
\frac{\Gamma(2\alpha+1)}{\Gamma(\alpha+\beta+1)\Gamma(\alpha-\beta)} \int_{0}^{1}u^{\alpha+\beta}(1-u)^{\alpha-\beta-1}
\sum_{j=0}^{\infty}B_{j}^{(-\alpha-\beta)}(\alpha-\beta)\frac{u^j\theta_n^j}{j!}\times \\
\sum_{j=0}^{\infty}B_{j}^{(-\alpha+\beta+1)}(0)\frac{(1-u)^{j}\theta_n^j}{j!} \sum_{j=0}^{\infty}B_{j}^{(2\alpha)}(0)\frac{\theta_n^j}{j!} e^{n\theta_n u}du\\
=\frac{\Gamma(2\alpha+1)}{\Gamma(\alpha+\beta+1)\Gamma(\alpha-\beta)}\int_{0}^{1}u^{\alpha+\beta}(1-u)^{\alpha-\beta-1}
\sum_{j=0}^{\infty} b_j(u)\theta_n^{j}e^{n\theta_n u}du,
\end{multline}
where $\dsty b_j(u)=\sum_{\begin{subarray}{c}
i_1+i_2+i_3=j\\
i_1,i_2,i_3\in \na\cup \{0\} 
\end{subarray}}\frac{B_{i_{1}}^{(-\alpha-\beta)}(\alpha-\beta)}{i_{1}!} \frac{B_{i_{2}}^{(-\alpha+\beta+1)}(0)}{i_{2}!}\frac{B_{i_{3}}^{(2\alpha)}(0)}{i_{3}!}u^{i_{1}}(1-u)^{i_2} $ are the coefficients of the Taylor development about $v=0$ of the function
$$e^{uv(\alpha-\beta)}\left(\frac{uv}{e^{uv}-1} \right)^{-\alpha-\beta}
\left(\frac{v(1-u)}{e^{v(1-u)}-1} \right)^{-\alpha+\beta+1}\left(\frac{v}{e^v-1} \right)^{2\alpha}.$$

As the series of the last equality in \eqref{forrem} converges uniformly in $[0,1]$,  

\begin{multline}\label{preaskey}
P_n(e^{\imath \frac{\theta}{n}};\alpha,\beta)=
\frac{\Gamma(2\alpha+1)}{\Gamma(\alpha+\beta+1)\Gamma(\alpha-\beta)} \sum_{j=0}^{\infty}\left(\sum_{|i|=j}\frac{B_{i_{1}}^{(-\alpha-\beta)}(\alpha-\beta)}{i_{1}!}\frac{B_{i_{2}}^{(-\alpha+\beta+1)}(0)}{i_{2}!}\frac{B_{i_{3}}^{(2\alpha)}(0)}{i_{3}!}  \times \right. \\
\left. \int_{0}^{1}u^{\alpha+\beta+i_{1}}(1-u)^{\alpha-\beta-1+i_{2}} e^{\imath\theta u}du\right) \left(\frac{\imath \theta}{n}\right)^{j}=\\
\sum_{j=0}^{\infty}\sum_{|i|=j}\frac{B_{i_{1}}^{(-\alpha-\beta)}(\alpha-\beta)}{i_{1}!}\frac{B_{i_{2}}^{(-\alpha+\beta+1)}(0)}{i_{2}!} \frac{B_{i_{3}}^{(2\alpha)}(0)}{i_{3}!}\times \\
\frac{(\alpha+\beta+1)_{i_1}(\alpha-\beta)_{i_2}}{(2\alpha+1)_{i_1+i_2}}{_1}F_1(1+\alpha+\beta+i_1;1+ 2 \alpha+i_1+i_2; \imath \theta)\left(\frac{\imath \theta}{n}\right)^{j},
\end{multline}
where $|i|=i_1+i_2+i_3,\,  i_1,i_2,i_3\in \na\cup \{0\}$. Consider now the remainder  $R_{k,n}$ defined as

\begin{multline}\label{remAsk}
P_n(e^{\imath \frac{\theta}{n}};\alpha,\beta)=\sum_{j=0}^{k}\sum_{|i|=j}\frac{B_{i_{1}}^{(-\alpha-\beta)}(\alpha-\beta)}{i_{1}!}\frac{B_{i_{2}}^{(-\alpha+\beta+1)}(0)}{i_{2}!} \frac{B_{i_{3}}^{(2\alpha)}(0)}{i_{3}!}\times \\
\frac{(\alpha+\beta+1)_{i_1}(\alpha-\beta)_{i_2}}{(2\alpha+1)_{i_1+i_2}}{_1}F_1(1+\alpha+\beta+i_1; 1+2 \alpha+i_1+i_2; \imath \theta)\left(\frac{\imath \theta}{n}\right)^{j}+R_{k,n}(\theta).
\end{multline}

From \eqref{forrem}  

\begin{multline}\label{remAsk1}
P_n(e^{\imath \frac{\theta}{n}};\alpha,\beta)=\frac{\Gamma(2\alpha+1)}{\Gamma(\alpha+\beta+1)\Gamma(\alpha-\beta)}\times\\
\left(\sum_{j=0}^{k} +\sum_{j=k+1}^{\infty}\right)\left(\int_{0}^{1}u^{\alpha+\beta}(1-u)^{\alpha-\beta-1}
b_j(u) e^{\imath \theta u}du\right)\left(\frac{\imath \theta}{n}\right)^{j}.
\end{multline}
From \eqref{remAsk} and \eqref{remAsk1} it can be deduced

\begin{equation}\label{remAsk2}
R_{k,n}(\theta)=
\frac{\Gamma(2\alpha+1)}{\Gamma(\alpha+\beta+1)\Gamma(\alpha-\beta)}\sum_{j=k+1}^{\infty}\left(\int_{0}^{1}u^{\alpha+\beta}(1-u)^{\alpha-\beta-1}
b_j(u) e^{\imath\theta u}du\right)\left(\frac{\imath \theta}{n}\right)^{j}.
\end{equation}

From  Cauchy's estimate \cite[(25) pp. 122]{A66},  
\begin{eqnarray}\label{cauchyestim}
|b_j(u)|\leq M(r) r^{-j},\, 0<r<2\pi,
\end{eqnarray}
where
\begin{multline*}
M(r)=\max_{|v|=r}\left|e^{uv(\alpha-\beta)}\left(\frac{uv}{e^{uv}-1} \right)^{-\alpha-\beta} \left(\frac{v(1-u)}{e^{v(1-u)}-1} \right)^{-\alpha+\beta+1}\left(\frac{v}{e^v-1} \right)^{2\alpha}\right|\leq\\
    \max_{|v|=r}\left|e^{uv(\alpha-\beta)}\left(\frac{uv}{e^{uv}-1} \right)^{-\alpha-\beta} \right|\max_{|v|=r}\left| \left(\frac{v(1-u)}{e^{v(1-u)}-1} \right)^{-\alpha+\beta+1}\right|\max_{|v|=r}\left|\left(\frac{v}{e^v-1} \right)^{2\alpha}\right|.
\end{multline*}

The change of variables $z_1=v(1-u), z_2=uv$ and   the fact that $u\in [0,1]$ give,

\begin{eqnarray*}
M(r)\leq \max_{|z_2|=r}\left|e^{z_2(\alpha-\beta)}\left(\frac{z_2}{e^{z_2}-1} \right)^{-\alpha-\beta} \right|\max_{|z_1|=r}\left| \left(\frac{z_1}{e^{z_1}-1} \right)^{-\alpha+\beta+1}\right|\max_{|v|=r}\left|\left(\frac{v}{e^v-1} \right)^{2\alpha}\right|,
\end{eqnarray*}
  therefore, from \eqref{cauchyestim}
\begin{multline}\label{cauchyestimf}
 |b_j(u)|\leq \\
  \left(\frac{2}{3\pi}\right)^{j}\max_{|z|=\frac{3\pi}{2}}\left|e^{z(\alpha-\beta)}\left(\frac{z}{e^{z}-1} \right)^{-\alpha-\beta} \right|\max_{|z|=\frac{3\pi}{2}}\left| \left(\frac{z}{e^{z}-1} \right)^{-\alpha+\beta+1}\right|\max_{|z|=\frac{3\pi}{2}}\left|\left(\frac{z}{e^z-1} \right)^{2\alpha}\right|.
 \end{multline}

 From the expression for the remainder \eqref{remAsk2} and from \eqref{cauchyestimf}, one obtains
\begin{multline*}
|R_{k,n}(\theta)|\leq \frac{\Gamma(\Re(\alpha+\beta+1)\Gamma(\Re(\alpha-\beta))}{|\Gamma(\alpha+\beta+1)\Gamma(\alpha-\beta)|}C(\alpha,\beta)\sum_{j=k+1}^{\infty} \left|\frac{2\theta}{3\pi n}\right|^{j}\leq \\
\frac{\Gamma(\Re(\alpha+\beta+1)\Gamma(\Re(\alpha-\beta))}{|\Gamma(\alpha+\beta+1)\Gamma(\alpha-\beta)|}C(\alpha,\beta)\left|\frac{1}{1-\frac{2\theta}{3\pi n}}\right|\left|\frac{2\theta}{3\pi n}\right|^{k+1}, 
\end{multline*}
where 
$$C(\alpha,\beta)=\max_{|z|=\frac{3\pi}{2}}\left|e^{z(\alpha-\beta)}\left(\frac{z}{e^{z}-1} \right)^{-\alpha-\beta} \right|\max_{|z|=\frac{3\pi}{2}}\left| \left(\frac{z}{e^{z}-1} \right)^{-\alpha+\beta+1}\right|\max_{|z|=\frac{3\pi}{2}}\left|\left(\frac{z}{e^z-1} \right)^{2\alpha}\right|.$$
\end{proof}

\section{An electrostatic model for zeros of a class of para-orthogonal polynomials}\label{PEI}

\begin{proof}(Of Theorem \ref{Th3})

   From the relation

$$
\displaystyle\frac{\partial}{\partial\theta_j}\ln\frac{1}{|e^{\imath\theta_k}-e^{\imath\theta_j}|}=\Im\left(\frac{e^{\imath\theta_j}}{e^{\imath\theta_j}-e^{\imath\theta_k}} \right),
$$
it can be  deduced that the partial derivatives of $E$ can be expressed as

\begin{eqnarray*}
\displaystyle\frac{\partial E}{\partial\theta_j}=\sum_{k\neq j}\Im\left(\frac{e^{\imath\theta_j}}{e^{\imath\theta_j}-e^{\imath\theta_k}} \right)-
\Im\left(\left(\frac{p}{1-e^{\imath\theta_j}}+\frac{\frac{n+p-1}{2}-\imath q}{e^{\imath\theta_j}} \right)e^{\imath\theta_j} \right).
\end{eqnarray*}

Notice that  the  auxiliary term $\dsty \frac{n+p-1}{2}$ is introduced into the above expression. This term, as will be seen below, completes the expression for the differential equation that defines the para--orthogonal polynomial. 

By introducing the polynomial $\dsty f(z)=\prod_{j=1}^{n}\left(z-e^{\imath\theta_j}\right)$, a straightforward calculation shows that the equation  $\nabla_{\theta}E(\theta_1,\ldots,\theta_n)=0$ can be expressed as
\begin{multline*}
\displaystyle\frac{\partial E}{\partial\theta_j}=
\Im\left(z_j\frac{1}{2}\frac{f^{\prime\prime}(z_j)}{f^{\prime}(z_j)} -\left(\frac{p}{1-z_j}+
\frac{\frac{n+p-1}{2}-\imath q}{z_j}\right)z_j\right)=\\
\Im
\left(\frac{z_j(1-z_j)f^{\prime\prime}(z_j)-(n+p-1-2\imath q-(n-p-1-2\imath q)z_j)f^{\prime}(z_j)}{2f^{\prime}(z_j)(1-z_j)} \right)=0,
\end{multline*}
where $z_j=e^{\imath\theta_j}$. If  $p=\alpha$ and $2\imath q=\beta$,  this last equation gives 
\begin{equation}\label{deltae}
\displaystyle\frac{\partial E}{\partial\theta_j}
=
\Im\left(\frac{z_j(1-z_j)f^{\prime\prime}(z_j)-(n+\alpha-1-\beta-(n-\alpha-\beta-1)z_j)f^{\prime}(z_j)}{f^{\prime}(z_j)(1-z_j)} \right)=0.
\end{equation}

Let us write (\ref{deltae}) as
\begin{equation}\label{deltaeaux}
\displaystyle\frac{\partial E}{\partial\theta_j}
=\frac{1}{|g_n(z_j)|^2}\Im\left(\Pi_n(z_j)\overline{g_n(z_j)}\right)=0,
\end{equation}
where $\Pi_n(z)=z(1-z)f^{\prime\prime}(z)-(n+\alpha-1-\beta-(n-\alpha-\beta-1)z)f^{\prime}(z)$ and $g_n(z)=f^{\prime}(z)(1-z)$.

 Notice that if  $\Pi_n(e^{\imath\theta})=\kappa_n f(e^{\imath\theta})$, for some adequate constant $\kappa_n\in\co$, then  \eqref{deltaeaux} also holds. By comparing the coefficient $z^n$ in
$$
\kappa_n f(z)=z(1-z)f^{\prime\prime}(z)-(n+\alpha-1-\beta-(n-\alpha-\beta-1)z)f^{\prime}(z),
$$
one can deduce that $\kappa_n=-n(\alpha+\beta)$ and from the fact that the hypergeometric differential equation
$$
z(1-z)y^{\prime\prime}-(\alpha+n-\beta-1-(n-\alpha-\beta-1)z)y^{\prime}+n(\alpha+\beta) y=0,
$$
has a unique monic polynomial solution
$$
B_n\left(z;\frac{(\alpha-\beta)_{n+1}}{(\alpha+\beta)_{n+1}}\right)=
\displaystyle\frac{(2\alpha)_n}{(\alpha+\beta)_n}\, {_2F_1}(-n, \alpha+\beta; 2\alpha; 1-z),
$$
it can be deduced that if $\{e^{\imath\theta^{*}_1},\ldots,e^{\imath\theta^{*}_n}\}$ are the zeros of the para--orthogonal polynomial $B_n$, then  $\nabla_{\theta}E(\theta_1^{*},\ldots,\theta_n^{*})=0$.

 Consider now $p>0$, and the need to prove  that  the  energy function $E$ attains its  global minimum at the  point  $(\theta^{*}_1,\ldots,\theta^{*}_n)$.  Define the set $\Theta_0=\{(\theta_1,\ldots,\theta_n)\in [0,2\pi]^n:  \theta_1<\ldots<\theta_j<\ldots<\theta_n\}$. Notice that as $(\theta_1,\ldots,\theta_n)$ approaches to the boundary of the set  $\Theta_0$,  $E\rightarrow +\infty$, therefore the solution set $\Theta_1$ of the problem
\begin{equation}\label{probmin}
\min_{(\theta_1,\ldots,\theta_n)\in \Theta_0} E(\theta_1,\ldots,\theta_n),
\end{equation}
belongs to the interior of the set $\Theta_0$. It follows from the theory of constrained optimization \cite[pp. 327--328]{noc99} that the first--order optimality conditions (or the Karush--Kuhn--Tucker conditions, more precisely) reduce to the equation $\nabla_{\theta}E=0$.

In order to see if the local extremum $(\theta_1^{*},\ldots,\theta_n^{*})$ is a global minimum, one can check that the Hessian matrix  is  positive definite in the interior of $\Theta_0$. Indeed, a straightforward calculation shows that the Hessian matrix $H$
\begin{eqnarray*}
H=(h_{j,k}), \quad h_{j,k}=\frac{\partial^{2} E}{\partial \theta_j\partial \theta_k } ,
\end{eqnarray*}
reduces to
\begin{eqnarray*}
\dsty  h_{j,k}=&\left\{\begin{array}{rl}
          \dsty  \frac{1}{2}\left( \frac{p}{1-\cos \theta_k}+ \sum_{\substack{
i=1\\
 i\neq k}}^{n}\frac{1}{1-\cos(\theta_k-\theta_i)}\right), &   j=k ,\\
             \dsty  -\frac{1}{2}\left(\frac{1}{1-\cos(\theta_k-\theta_j)}\right),&  j\neq k.
            \end{array} \right.
\end{eqnarray*}

The above calculation shows that $H$ is real, symmetric, strictly diagonally dominant (see please \cite[Def. 6.1.9]{HJ}), and its diagonal entries are positive, from which it can be deduced   that $H$ is positive definite in the interior of $\Theta_0$, cf. \cite[Th. 6.1.10 (c)]{HJ},  therefore, the local extremum is a global minimum, provided that $p>0$.

 If one assumes $p<0$, evidently there is no global minimum for the energy function and this proves the theorem.
\end{proof}

   This section ends with some final remarks.  The sign of $\alpha$  strongly influences  the sign of the Hessian matrix. Considering  $-\frac{1}{2}<\alpha<0$,    the zeros of the para--orthogonal polynomial define a stationary point for the energy function. In this case the stationary point is  a saddle point for the  Hessian and, as could be seen in the proof of the theorem,   the energy function remains unbounded from below so that there is no global minimizer. This  is the case of a   negative electric charge fixed at $+1$ and $n$ free positive  charges varying at the boundary of the unit circle. This particular case   is interesting since the zeros of the para--orthogonal polynomials define a stationary point of the energy function. However they do not define a global minimum of the energy as in $\alpha>0$.

Recently, in \cite{simanek}, the author  provided a constructive method to find an electrostatic  model in some sense  for  zeros of para--orthogonal polynomials, by finding a  differential equation with rational coefficients  for the para--orthogonal polynomials. In the  case presented here,  the a prior knowledge that a  hypergeometric differential equation  already existed is used, which makes the model very similar to the classical one given by Stieltjes  for the Jacobi polynomials. 

 It would be an interesting problem to determine if the stationary point defined by the zeros of the $n$--th para--orthogonal polynomial for the  case $-\frac{1}{2}<\alpha<0$  is a local minimum, in other words, if one puts $n\geq 2$ unit masses    at the zeros of the $n$--th para--orthogonal polynomial associated to the measure $\omega(\theta;\alpha,\beta)$,   and one additional   mass point with value $-\frac{1}{2}<\alpha<0$  fixed at $+1$ and lets the system interact with the energy function defined by \eqref{E}, what can be said if a small perturbation over the masses is done?

\noindent\textit{Acknowledgements.}

 We thank the  anonymous reviewer whose comments and suggestions helped improve and clarify this manuscript. 
  
  The first author  was supported by FAPESP of Brazil (grant 2012/21042-0) and  the financial support of Ministerio de Econom\'{\i}a y Competitividad of Spain  (grant MTM2012--36732--C03-01). 
 The  second author  acknowledges the support of the Funda\c{c}\~ao de Amparo \`a Pesquisa do Estado de Minas Gerais--FAPEMIG (grant PPM--00478--15) and the Pr\'o-Reitoria de Pesquisa e P\'os-Gradua\c{c}\~ao da Universidade Federal de Uberl\^andia--PROPP/UFU.

\end{document}